\documentclass[10 pt]{article}
\usepackage[margin=1in]{geometry}
\newcommand\norm[1]{\left\lVert#1\right\rVert}
\usepackage{geometry}
\usepackage{graphicx}
\usepackage{tikz}
\usetikzlibrary{shapes,shadows,arrows}
\usepackage{amsmath,amsfonts,amssymb,amscd,amsthm,xspace}
\usepackage{amsmath, color}
\theoremstyle{plain}

\newtheorem{theorem}{Theorem}[subsection]

\theoremstyle{definition}

\theoremstyle{remark}

\begin{document}
\title{\itshape Direct and Inverse Source Problem for a Space Fractional Advection Dispersion Equation }
\author{{\small Abeer Aldoghaither,  Taous-Meriem Laleg-Kirati and Da-Yan Liu }\\  {\small E-mail: abeer.aldoghaiher@kaust.edu.sa, taousmeriem.laleg@kaust.edu.sa and dayan.liu@kaust.edu.sa } \\ {\small Mathematical and Computational Sciences and Engineering Division, King
Abdullah University of Science} \\ {\small and Technology (KAUST), Thuwal, Kingdom of
Saudi Arabia}}
\date{}

\maketitle

\begin{abstract}
In this paper, direct and inverse problems for a space fractional advection dispersion equation on a finite domain are studied. The inverse problem consists in determining the source term from a final observation.  We first drive the fundamental solution to the direct problem and we  show that the relation between source and the final observation  is  linear. Moreover, we study the well-posedness of both problems: existence, uniqueness and stability. Finally, we illustrate the  results with a numerical example.

\subsubsection*{keywords} space fractional advection dispersion equation; inverse source problem; ill-posedness, Tikhonov 
\end{abstract}

\section{Introduction}
\vspace{-0.4cm}
Fractional calculus has been used in many applications  in various fields of sciences and engineering, such as  physics, chemistry, biology, Control, electrical and mechanical  engineering, signal processing,  and  finance \cite{podl, Meerschaert, Xiong}.
For instance, when describing anomalous diffusion, such as contaminants transport in the soil, oil flow in porous media,  groundwater flow and turbulence \cite{Wei, Schumer1,Xiong}, there is an evidence that  fractional  models are efficient to capture some important features of particles transport, such as particles with velocity variation and long-rest periods \cite{Schumer2}. This is due to  the  nonlocal property of the fractional operator.   

While direct problem in fractional diffusion equation  is widely considered, work on inverse problem is more recent.  Mainardi \cite{MainardiF}, solved a fractional diffusion wave equation in a one dimensional bounded domain by applying the Laplace transform. Sakamoto  \textit{et al} \cite{SakamotoI}, considered a fractional diffusion equation where they established the unique existence of the weak solution based on eigenfuction expansion. Moreover, they studied the stability and the uniqueness for the backward problem and for the inverse source problem. Pedas \textit{et al} \cite{PedasN}, presented a numerical solution by a piecewise polynomial collocation method. Brunner \textit{et al} \cite{BrunnerN}, presented an algorithm based on an adaptive time stepping and adaptive spatial basis selection approach, to solve a 2D fractional subdiffusion problem.  Wei \textit{et al} \cite{Wei}, considered an inverse source problem for a  space fractional diffusion equation, where they numerically solved the inverse problem using the best perturbation method based on the Tikhonov regularization. Bondarenko \textit{et al} \cite{BondarenkoG,BondarenkoN}, obtained an exact analytic solution of a time fractional diffusion equation where they determined the diffusion coefficient and the derivative order. They also presented a conditionally stable weighted difference scheme and give numerical results by solving a minimization problem with the Levenberg-Marquardt algorithm.


Unlike the fractional diffusion equation which attracted many researchers, work on fractional advection dispersion equation is less-well covered. Salim \textit{et al} \cite{SalimA}, presented an analytic solution for the time fractional advection dispersion equation with a reaction term, by applying the Fourier and Laplace transform. Zheng \textit{et al} \cite{Zheng}, presented a spectral regularization method, based on the solution given by the Fourier method  for the Cauchy problem for a time fractional advection dispersion equation.
Chen \textit{et al} \cite{ChenAD}, considered a two-dimensional fractional advection dispersion equation on a finite domain. They presented an unconditionally stable second order difference method based on the Directions Implicit-Euler method (ADI-Euler) and the unshifted Grunwald formula.  
Liu \textit{et al} \cite{LiuN},  presented a numerical method for a space fractional advection dispersion equation,  where the partial differential equation was transferred into an ODE by applying the method of lines.
Chi \textit{et al}  \cite{Chi}, considered an inverse problem for a space fractional advection dispersion equation where they determined the space-dependent source magnitude from a final observation. They have  solved the inverse problem numerically in presence and in absence of noises using an optimal perturbation regularization algorithm.  However, the stability of the proposed  method depends on the initial guess and the choice of some base functions. Zhang \textit{et al} \cite{Zhang}, have solved this inverse problem using the same optimal perturbation regularization algorithm when the fractional order, the diffusion coefficient and the average velocity are unknown. \\
Huang  \textit{et al} \cite{Huang}, derived the fundamental solution for the direct problem for the time and space fractional dispersion equations in terms of the Green function. However, they only considered  homogenous equations in their study.
To the authors' best knowledge, except \cite{Huang}, there is no other work on the mathematical analysis of the space fractional advection dispersion equation. 

In this paper, we are interested in mathematical analysis of a space fractional advection dispersion equation that can be used in modeling underground water transport in heterogeneous porous media \cite{Schumer1}.   The space fractional advection dispersion equation is in fact used  to   model particles with velocity variation, while time fractional advection dispersion equation is used to model particles with long-rest period \cite{Schumer1}. In addition to the physical importance of such a model, inverse problems play an important role, where unknown physical  quantities  are estimated from available measurements. For instance, in ground water contaminant transport an estimation of the concentration of the source can provide information on its structure, whether it is a heavy metal, an organic substance, or a specific material hazardous the environment.  

The goal of this paper is to mathematically analyze direct and inverse source problems for a space fractional advection dispersion equation. The inverse problem consists in recovering the source term using final observations by assuming that the source is time independent. This paper is organized as follows. In section 2, the problem and some primarily definitions will be introduced.  In section 3, starting from the results obtained in  \cite{Huang} on the solution of a homogeneous fractional dispersion equation,    an exact analytic solution for the non-homogeneous case will be  drived. However, to  overcome some obstacles,  superposition and  Duhamel's principles are used.  Moreover,   existence and uniqueness of the solution will be  established. Section 4 will show that the relation between the unknown source and the  final observation is in fact linear  which simplifies the mathematical and numerical analysis of the inverse problem.  We  will also study  the well-posedness of the inverse problem in term of existence, uniqueness and stability of the solution \cite{Kirsch}. A numerical method based on the Tikhonov regularization will be  presented in section 5 and in section 6,  a numerical example  will be  given. Finally, a conclusion will summarize the obtained results.
\section{Problem Statement}
We consider the following fractional advection dispersion equation 
\begin{equation}\label{FADE} 
		\left\{
			\begin{array}{ l ll}  	
					\dfrac{\partial c(x,t)}{\partial t} = -\nu \dfrac{\partial c(x,t)}{\partial x} + d \dfrac{\partial ^ \alpha c(x,t) }{\partial x^\alpha} + r(x), &\quad \quad  0<x < L, & \quad t >0 , \\		
					c(x,0)=g_0(x),    \\
     					c(0,t)=0, \\
     					c(L,t)=0, \\
			\end{array}
		\right.
\end{equation}
where $c$ is the concentration, $\nu$ is the average velocity, $d$ the is dispersion coefficient, $r$ is the source term, and $\alpha$ is the fractional order for the space derivative with $1 < \alpha \le 2 $. We assume that $\nu$ and $d$ are constant and the source term depends only on $x$. \\
By assuming that we have both left-to-right and right-to-left flows, we consider the Riesz-Feller fractional derivative of order $\alpha$, defined as follows \cite{Huang}:
	\begin{equation}\label{Riesz_feller}
\begin{split}
		D^\alpha_\theta c(x,t) = \frac{\Gamma(1+\alpha)}{\pi} 				\left \{  \sin[\frac{(\alpha+\theta)\pi}{2}]	 \int_{- \infty}^{+\infty}	  \frac{c(x+\xi,t)-c(x,t)}{\xi ^{1+\alpha}} {\rm d} \xi 	\right.									 \\
 \left. + \sin[\frac{(\alpha-\theta)\pi}{2}]	 \int_{- \infty}^{+\infty}	  \frac{c(x-\xi,t)-c(x,t)}{\xi ^{1+ \alpha}} {\rm d} \xi
 				\right \},
\end{split}
\end{equation}
where $\theta$ is the skewness with  $|\theta| \le \min \{ \alpha, 2-\alpha \}$. The Riesz-Feller fractional derivative can also be defined by \cite{ZhengT}:
	\begin{equation}\label{Riesz_feller}	
\begin{split}
		D^\alpha_\theta c(x,t) = -\frac{1}{\Gamma(2-\alpha) \sin(\alpha \pi)}			\left \{  \sin \frac{(\alpha-\theta)\pi}{2}	 \frac{{\rm d}^2}{{\rm d} x^2} \int_{-\infty}^{x}	  \frac{c(\xi,t)}{(x - \xi) ^{\alpha-1}} {\rm d} \xi 	\right.									 \\
 \left. + \sin \frac{(\alpha+\theta)\pi}{2}	\frac{{\rm d}^2}{{\rm d} x^2} \int_x^{+\infty}	  \frac{c(\xi,t)}{(\xi-x) ^{1+ \alpha}} {\rm d} \xi
 				\right \}.
\end{split}
\end{equation}
The Fourier transform of the Riesz-Feller fractional derivative is defined by \cite{Huang}:
				\begin{equation}\label{FT_RF}	
						\mathcal{F}\{ D^\alpha _\theta c(x,t)\} = -\psi_\theta ^\alpha(k) \hat c(k,t),	
				\end{equation}
with
	\begin{equation}\label{psi}
 			\psi_\theta ^\alpha(k)= |k|^\alpha e^{{\rm i}(sign (k))\theta \pi/2},
	\end{equation}
where $\hat c$ denotes the Fourier transform of $c$, and $k$ is the variable in the frequency domain.

We consider the inverse problem that consists in finding the source term $r$ where the concentration $c$ is also unknown except at a final time $t=T$:
\begin{eqnarray}\label{prior1}
			\begin{array}{ll}
				c(x,T)=g_T(x), &  0 < x < L,
			\end{array}
	\end{eqnarray}
where $g_T$ is the measured concentration.  We assume that the velocity $\nu$, the dispersion coefficient $d$, and the derivative order $\alpha$ are known.

In this paper, we first solve the direct problem and present an exact analytic solution to (\ref{FADE}).  We show that the operator relating the unkown source to the final observation is linear which simplifies the analysis of the properties of the inverse problem and its numerical solution. 
\section{Direct Problem}
\vspace{-0.4cm}
In this section, we construct a closed-form analytic solution for the direct problem of (\ref{FADE}), which gives a linear operator which  simplifies the mathematical analysis of existence and uniqueness of the solution.  
\subsection{Analytic Solution }
 We propose to use the Fourier transform method, which is commonly used to solve fractional differential equations and transfer  non-linear equation to linear equation \cite{ZhengT,QianO}. Then, we use the integrating factor to obtain the solution of the direct problem. In  order to apply the Fourier transform to (\ref{FADE}), we propose to  extend  $c$ and $r$ to the whole real line by defining: $\forall x \in \mathbb{R}, \, \forall t \in \mathbb{R}^*_+$,
\begin{equation}
u(x,t)=c(x,t) \cdot p(x),
\end{equation}
\begin{equation}
f(x) = r(x)\cdot p(x),
\end{equation}
where
\begin{equation}
p(x)= \left \{
		\begin{array}{ll}
		1, & \text{ if } \, \, x \in ]0,L[, \\
		 0, & \text{  else}.
		\end{array}	\right.
\end{equation}
Then, we propose to solve the following equation:  
\begin{equation}\label{u_FADE}
		\left\{
			\begin{array}{ l ll}  	
					\dfrac{\partial u(x,t)}{\partial t} = -\nu \dfrac{\partial u(x,t)}{\partial x} + d \dfrac{\partial ^ \alpha u(x,t) }{\partial x^\alpha} + f(x), & \quad \quad  x \in \mathbb{R}, &\quad  t >0 , \\		
					u(x,0)=g_0(x).   \\
			\end{array}
		\right.
\end{equation}
\begin{theorem}
Assuming that $f ,g_0\in L^{2}(\mathbb{R})$, then there exists a solution of the system (\ref{u_FADE}), which can be given as follows:
	\begin{equation}\label{Solution_DP}
		u(x,t)=   \int_{- \infty}^{+ \infty} \left \{ \int_{0}^{t}   G_\alpha^\theta(x-y,t-\tau) {\rm d} \tau \right \} f(y)  {\rm d} y +   \int_{- \infty}^{+ \infty}    G_\alpha^\theta(x-y,t) g_0(y)   {\rm d} y,
	\end{equation}
	where \begin{equation}\label{Green}
	G_\alpha^\theta(x,t)= \frac{1}{2\pi} \int_{- \infty}^{+ \infty} {\rm e}^{-{\rm i} kx} \hat G_\alpha^\theta(k,t){\rm d} k,
\end{equation}
with
\begin{equation}\label{F_Green}
\hat G_\alpha^\theta (k,t)= {\rm e}^{({\rm i} \nu k- d \psi_\theta^\alpha(k))t }.
\end{equation}

\end{theorem}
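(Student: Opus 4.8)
The plan is to solve the whole-line problem (\ref{u_FADE}) by the spatial Fourier transform, reducing the fractional PDE to a scalar linear ODE in time, solving that ODE with an integrating factor, and then inverting the transform. Writing $\hat u(k,t)=\int_{\mathbb{R}} u(x,t)\,{\rm e}^{{\rm i}kx}\,{\rm d}x$ (the convention consistent with the inverse transform in (\ref{Green})), the advection term transforms as $\mathcal{F}\{\partial_x u\}=-{\rm i}k\,\hat u$ and the fractional term as in (\ref{FT_RF}), $\mathcal{F}\{D^\alpha_\theta u\}=-\psi_\theta^\alpha(k)\,\hat u$. First I would apply $\mathcal{F}$ to both the equation and the initial datum in (\ref{u_FADE}) to obtain
\[
\partial_t \hat u(k,t) = \bigl({\rm i}\nu k - d\,\psi_\theta^\alpha(k)\bigr)\hat u(k,t) + \hat f(k),\qquad \hat u(k,0)=\hat g_0(k),
\]
an ODE parametrized by $k$ whose coefficient $A(k):={\rm i}\nu k - d\,\psi_\theta^\alpha(k)$ is exactly the exponent appearing in (\ref{F_Green}).

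The next step is to integrate this ODE by multiplying through by ${\rm e}^{-A(k)t}$ and integrating from $0$ to $t$, which yields
\[
\hat u(k,t) = {\rm e}^{A(k)t}\,\hat g_0(k) + \hat f(k)\int_0^t {\rm e}^{A(k)(t-\tau)}\,{\rm d}\tau = \hat G_\alpha^\theta(k,t)\,\hat g_0(k) + \hat f(k)\int_0^t \hat G_\alpha^\theta(k,t-\tau)\,{\rm d}\tau .
\]
I would then invert the transform term by term. By the convolution theorem, the inverse transform of $\hat G_\alpha^\theta(k,t)\hat g_0(k)$ is the convolution $\int_{\mathbb{R}}G_\alpha^\theta(x-y,t)g_0(y)\,{\rm d}y$; recognizing $\int_0^t\hat G_\alpha^\theta(k,t-\tau)\,{\rm d}\tau$ as the transform of $\int_0^t G_\alpha^\theta(\cdot,t-\tau)\,{\rm d}\tau$ (this is the Duhamel step), its product with $\hat f$ inverts to the first integral in (\ref{Solution_DP}). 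This reproduces the claimed formula.

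For the existence and well-definedness, I would control the Fourier multiplier $\hat G_\alpha^\theta$. Since $A(k)$ has real part $-d\,|k|^\alpha\cos(\theta\pi/2)$, and the constraint $|\theta|\le\min\{\alpha,2-\alpha\}$ with $1<\alpha\le 2$ forces $|\theta\pi/2|<\pi/2$ and hence $\cos(\theta\pi/2)>0$, one obtains $\bigl|\hat G_\alpha^\theta(k,t)\bigr|={\rm e}^{-d|k|^\alpha\cos(\theta\pi/2)\,t}\le 1$ for every $t\ge 0$ (with $d>0$). Consequently $\hat G_\alpha^\theta(\cdot,t)\hat g_0\in L^2$, and the Duhamel multiplier is bounded by $t\,|\hat f|$, so $\hat u(\cdot,t)\in L^2$; by Plancherel, $u(\cdot,t)\in L^2$ is well defined for each $t$. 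Letting $t\to 0$ gives $\hat G_\alpha^\theta(k,t)\to 1$ and a vanishing Duhamel integral, which recovers $u(x,0)=g_0(x)$.

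The main obstacle I anticipate is the rigorous justification of the formal manipulations rather than the algebra itself: interchanging the inverse Fourier integral with the spatial and fractional derivatives and with the time integral (Fubini and differentiation under the integral sign), and in particular verifying that the Duhamel term genuinely satisfies the equation. Differentiating $\int_0^t G_\alpha^\theta(x-y,t-\tau)\,{\rm d}\tau$ in $t$ by the Leibniz rule produces a boundary contribution from $\tau\to t$, i.e. from $G_\alpha^\theta(\cdot,0)$, which behaves like a Dirac mass and must be handled carefully so that it reproduces the source $f$. Because the symbol $|k|^\alpha$ grows, securing enough smoothness and decay of $G_\alpha^\theta(\cdot,t)$ for $t>0$ to legitimately apply $D^\alpha_\theta$ under the integral is the delicate technical point; the uniform bound $|\hat G_\alpha^\theta|\le 1$ established above is precisely what makes these interchanges admissible.
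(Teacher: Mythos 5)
Your proposal is correct and follows essentially the same route as the paper --- spatial Fourier transform, an integrating factor for the resulting ODE in $t$, and inversion via the convolution theorem --- the only organizational difference being that you transform the inhomogeneous equation directly and perform the Duhamel step at the ODE level, whereas the paper first splits $u=v+w$ by superposition and invokes Duhamel's principle in physical space before transforming. Your added observation that $|\hat G_\alpha^\theta(k,t)|={\rm e}^{-d|k|^\alpha\cos(\theta\pi/2)\,t}\le 1$, giving $\hat u(\cdot,t)\in L^2$ and hence a well-defined $u(\cdot,t)$ by Plancherel, actually substantiates the existence claim more than the paper's purely formal computation does.
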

We  refer to Appendix A for more details about the green function $G_\alpha^\theta$.

\begin{proof}
Applying the superposition principle, the solution of equation (\ref{u_FADE}) can be written as $u = v + w$, where $w$ and $v$ are solutions to the following equations, respectively. 
\begin{equation}\label{w}
		\left\{
			\begin{array}{ l ll}  	
     				\dfrac{\partial w(x,t)}{\partial t} = -\nu \dfrac{\partial w(x,t)}{\partial x} + d \dfrac{\partial^\alpha w(x,t)}{\partial x^\alpha}, & \quad \quad x \in \mathbb{R}, & \quad  t>0, \\
				w(x,0)=g_0 (x),  \\
			\end{array}
		\right.
\end{equation}	
\begin{equation}\label{v}
		\left\{
			\begin{array}{ l ll}  	
     				\dfrac{\partial v(x,t)}{\partial t} = -\nu \dfrac{\partial v(x,t)}{\partial x} + d \dfrac{\partial^\alpha v(x,t)}{\partial x^\alpha} + f(x), & \quad \quad x \in \mathbb{R}, &\quad   t>0, \\
				v(x,0)=0.  \\
			\end{array}
		\right.
\end{equation}

Applying the Duhamel's principle \cite{Jeffrey}, the solution $v$ of (\ref{v}) is given by
 \begin{equation}\label{D_principle}
 v(x,t)= \int_0^t V(x,t,\tau) {\rm d} \tau,
 \end{equation} where $\tau$ is fixed with $\tau \in ]0,t[$ and $V(\cdot,\cdot:\tau)$ is the solution of the following equation:
  \begin{equation}\label{V}
		\left\{
			\begin{array}{ l ll}  	
     				\dfrac{\partial V(x,t;\tau)}{\partial t} = -\nu \dfrac{\partial V(x,t;\tau)}{\partial x} + d \dfrac{\partial^\alpha V(x,t;\tau)}{\partial x^\alpha}, & \quad \quad  x \in \mathbb{R}, & \quad  t>0, \\
				V(x;t=\tau)= f(x).\\
			\end{array}
		\right.
\end{equation}

Therefore, we start solving (\ref{V}) by applying the Fourier transform. Then we get:
\begin{equation}
	\mathcal{F} \{\frac{\partial V(x,t)}{\partial t}\} = -\nu \mathcal{F}\{ \frac{\partial V(x,t)}{\partial x}\}+ d \mathcal{F} \{\frac{\partial ^ \alpha V(x,t) }{\partial x^\alpha}\},
\end{equation}
\begin{equation}
\frac{\partial \hat{V}(k,t)}{\partial t} = -\nu (-{\rm i} k)^1  \hat{V}(k,t) - d \psi^\alpha_\theta (k) \hat{V}(k,t),
\end{equation}
where the Fourier transform $\hat V$ of $V$ is obtained using formulas (\ref{FT_RF}) and (\ref{psi}).
\\ Then, we have:
\begin{equation}\label{eq_1}
	\frac{\partial \hat{V}(k,t)}{\partial t} = \left [ \nu {\rm i} k - d \psi^\alpha_\theta (k) \right ]\hat{V}(k,t).
\end{equation}
By multiplying (\ref{eq_1}) by the integrating factor  $exp[\int - [ \nu {\rm i} k - d \psi^\alpha_\theta (k)] {\rm d}t] = exp \{ [- \nu {\rm i} k + d \psi^\alpha_\theta (k)]t\} $, we get:
\begin{equation}
	\frac{\partial \hat{V}(k,t)}{\partial t} {\rm e}^{  [- \nu {\rm i} k + d \psi^\alpha_\theta (k)]t} - {\rm e}^{  [- \nu {\rm i} k + d \psi^\alpha_\theta (k)]t}  [ \nu k {\rm i} -  d \psi^\alpha_\theta(k)] \hat{V}(k,t) =0,
\end{equation}
\begin{equation}
	\frac{{\rm d}}{{\rm d}t} \left ( \hat{V}(k,t)  {\rm e}^{  [- \nu {\rm i} k + d \psi^\alpha_\theta (k)]t} \right ) = 0.
\end{equation}
Then, by integrating with respect to $t$, it yields:
\begin{equation}\label{constant}
	\hat{V}(k,t) = {\rm e}^{  [ \nu {\rm i} k - d \psi^\alpha_\theta (k)]t}  c_1,
\end{equation}
where $c_1 \in \mathbb{R}$ is a constant.
Applying the initial condition $\hat{V}(k,t=\tau)=\hat{f}(k)$ to (\ref{constant}) gives us:
\begin{equation}
	\hat f(k) ={\rm e}^{  [ \nu {\rm i} k - d \psi^\alpha_\theta (k)]\tau}  c_1,
\end{equation}
\begin{equation}\label{c_1}
	c_1= {\rm e}^{ - [ \nu {\rm i} k - d \psi^\alpha_\theta (k)]\tau} \hat f(k).
\end{equation}
Substituting (\ref{c_1}) into (\ref{constant}) gives us the solution of (\ref{V}):
\begin{equation}\label{V_fourier}
	\hat{V}(k,t) = {\rm e}^{  [ \nu {\rm i} k - d \psi^\alpha_\theta (k)](t-\tau)} \hat f(k).
\end{equation}
Applying the inverse Fourier Transformation to (\ref{V_fourier}), we obtain:
\begin{equation}
	V(x,t) = \frac{1}{2\pi}\int_{-\infty}^{\infty} {\rm e}^{  [ \nu {\rm i} k - d \psi^\alpha_\theta (k)](t-\tau)} \hat f(k) {\rm e}^{-{\rm i} kx} {\rm d} k.
\end{equation}
Finally, using (\ref{D_principle}) gives us the solution $v$ of (\ref{v}):
\begin{equation}
v(x,t)= \frac{1}{2\pi} \int_{0}^{t} \int_{-\infty}^{\infty} {\rm e}^{  [ \nu {\rm i} k - d \psi^\alpha_\theta (k)](t-\tau)} \hat f(k) {\rm e}^{-{ \rm i} kx} {\rm d} k {\rm d} \tau,
\end{equation}
which can be written as:
\begin{equation}\label{solution_v}
v(x,t)= \frac{1}{2\pi} \int_{0}^{t} \int_{-\infty}^{\infty}  \hat G_\alpha^\theta (k,t-\tau) \hat f(k) {\rm e}^{-{\rm i} kx} {\rm d} k {\rm d} \tau.
\end{equation}
 Using the same technique the solution $w$ of (\ref{w}) can be obtained:
		\begin{equation}\label{solution_w}
			w(x,t)=	\frac{1}{2\pi}\int_{- \infty}^{+ \infty} \hat G_\alpha^\theta(k,t) \hat g_0(k) {\rm e}^{-{\rm i} kx} {\rm d} k,
		\end{equation}
where
	$\hat{g}_0$ is the Fourier transform of $g_0$. 
	
Then, by adding (\ref{solution_v}) and (\ref{solution_w}), we can get the solution of (\ref{u_FADE}):
	\begin{equation}\label{DP_solution}
		u(x,t)=  \frac{1}{2\pi}\int_{0}^{t}  \int_{- \infty}^{+ \infty}  \hat G_\alpha^\theta(k,t-\tau) \hat f(k) {\rm e}^{-{\rm i} kx} {\rm d} k {\rm d} \tau +   \frac{1}{2\pi}\int_{- \infty}^{+ \infty} \hat G_\alpha^\theta(k,t) \hat g_0(k) {\rm e}^{-{\rm i} kx} {\rm d} k.
	\end{equation}
By inverting the Fourier transform of $\hat f$ and $\hat g_0$ in (\ref{DP_solution}), we get:
\begin{equation}
	\begin{split}
	u(x,t)=  & \frac{1}{2\pi}\int_{0}^{t}  \int_{- \infty}^{+ \infty} {\rm e}^{-{\rm i} kx} \hat G_\alpha^\theta(k,t-\tau)   \int_{- \infty}^{+ \infty} f(y) {\rm e}^{{\rm i} ky} {\rm d} y {\rm d} k  {\rm d} \tau  \\ & +
	 \frac{1}{2\pi}\int_{- \infty}^{+ \infty} {\rm e}^{-{\rm i} kx} \hat G_\alpha^\theta(k,t)  \int_{- \infty}^{+ \infty} g_0(y) {\rm e}^{{\rm i} ky}  {\rm d} y {\rm d} k,
	\end{split}
\end{equation}
\begin{equation}
\begin{split}
	=  & \int_{- \infty}^{+ \infty} \left \{ \int_{0}^{t} \left [ \frac{1}{2\pi}  \int_{- \infty}^{+ \infty} {\rm e}^{-{\rm i} k(x-y)} \hat G_\alpha^\theta(k,t-\tau)  {\rm d} k \right ] {\rm d} \tau \right \} f(y)  {\rm d} y   \\
	&  +  \int_{- \infty}^{+ \infty} \left [\frac{1}{2\pi} \int_{- \infty}^{+ \infty} {\rm e}^{-{\rm i} k(x-y)} \hat G_\alpha^\theta(k,t) {\rm d} k \right ]   g_0(y)   {\rm d} y.
\end{split}
\end{equation}
Then, the fundamental solution of (\ref{u_FADE}) is:
\begin{equation}\label{Solution_DP}
	u(x,t)=   \int_{- \infty}^{+ \infty} \left \{ \int_{0}^{t}   G_\alpha^\theta(x-y,t-\tau) {\rm d} \tau \right \} f(y)  {\rm d} y +   \int_{- \infty}^{+ \infty}    G_\alpha^\theta(x-y,t) g_0(y)   {\rm d} y.
\end{equation}
\end{proof}

\subsection{Uniqueness}
The uniqueness of the solution is a direct result from the linearity of the integral operator.
\begin{theorem}
Let $u_1$ and $u_2$ be two solutions of (\ref{u_FADE}) with sources $f_1$ and $f_2$, respectively. Then, the condition $f_1=f_2$ implies that $u_1=u_2$.
\end{theorem}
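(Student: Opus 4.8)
The plan is to exploit the explicit representation (\ref{Solution_DP}) established in the previous theorem, which expresses any solution of (\ref{u_FADE}) as the sum of a source-dependent term and an initial-data-dependent term, both built from the same Green function $G_\alpha^\theta$. Since $u_1$ and $u_2$ solve the same system (\ref{u_FADE}), they share the identical initial datum $g_0$ and the identical kernel, and can therefore differ only through their sources $f_1$ and $f_2$. Subtracting the two representations, the initial-data contributions cancel and I obtain
\begin{equation*}
u_1(x,t)-u_2(x,t)=\int_{-\infty}^{+\infty}\left\{\int_0^t G_\alpha^\theta(x-y,t-\tau)\,{\rm d}\tau\right\}\bigl(f_1(y)-f_2(y)\bigr)\,{\rm d}y.
\end{equation*}
The right-hand side is a linear integral operator acting on $f_1-f_2$, so the hypothesis $f_1=f_2$ makes the integrand vanish identically and hence $u_1=u_2$.

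To make this rigorous without presupposing that every solution is given by (\ref{Solution_DP}), I would instead argue directly on the difference $w:=u_1-u_2$. By linearity of the equation, $w$ satisfies the homogeneous problem
\begin{equation*}
\frac{\partial w}{\partial t}=-\nu\frac{\partial w}{\partial x}+d\frac{\partial^\alpha w}{\partial x^\alpha},\qquad w(x,0)=0,
\end{equation*}
since $f_1-f_2=0$ and both solutions carry the same initial datum $g_0$. Applying the Fourier transform exactly as in the proof of the existence theorem reduces this to the scalar ODE $\partial_t\hat w(k,t)=[\nu{\rm i}k-d\psi_\theta^\alpha(k)]\hat w(k,t)$ with $\hat w(k,0)=0$. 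For each fixed $k$ this linear ODE has the unique solution $\hat w(k,t)\equiv 0$, and injectivity of the Fourier transform on $L^2(\mathbb{R})$ then yields $w\equiv 0$, that is, $u_1=u_2$.

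The statement carries essentially no analytic difficulty: its entire content is the linearity of the solution map $f\mapsto u$ for fixed initial data, which is already visible in (\ref{Solution_DP}). The only points requiring care are bookkeeping ones, namely confirming that $w\in L^2(\mathbb{R})$ so that the Fourier transform and its inversion are legitimate, and noting that the initial conditions of $u_1$ and $u_2$ genuinely coincide so that the initial-data terms cancel. I expect no substantive obstacle beyond these formal justifications.
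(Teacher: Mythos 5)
Your first argument is exactly the paper's proof: both subtract the representation formula (\ref{Solution_DP}) for $u_1$ and $u_2$, cancel the common initial-data term, and observe that the remaining integral operator applied to $f_1-f_2=0$ vanishes. Your supplementary Fourier-transform argument on the homogeneous difference equation goes beyond what the paper does and in fact addresses a point the paper glosses over (that the conclusion should not presuppose every solution has the form (\ref{Solution_DP})), but the core route is the same.
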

\begin{proof}
Suppose that $f_1-f_2=0$. Since $u_1$ and $u_2$ are solutions of (\ref{u_FADE}), we have:
\begin{equation}
	u_1(x,t)=   \int_{- \infty}^{+ \infty} \left \{ \int_{0}^{t}   G_\alpha^\theta(x-y,t-\tau) d\tau \right \} f_1(y)  {\rm d} y +   \int_{- \infty}^{+ \infty}    G_\alpha^\theta(x-y,t) g_0(y)  { \rm d} y,
\end{equation}
\begin{equation}
	u_2(x,t)=   \int_{- \infty}^{+ \infty} \left \{ \int_{0}^{t}   G_\alpha^\theta(x-y,t-\tau) {\rm d} \tau \right \} f_2(y)  {\rm d} y  +    \int_{- \infty}^{+ \infty}    G_\alpha^\theta(x-y,t) g_0(y)   {\rm d} y.
\end{equation}
Then the following equality
\begin{equation}
	u_1(x,t) - u_2(x,t)=   \int_{- \infty}^{+ \infty} \left \{ \int_{0}^{t}   G_\alpha^\theta(x-y,t-\tau) {\rm d} \tau \right \} [f_1(y) - f_2(y)] {\rm d} y =0
\end{equation}
completes the proof.
\end{proof}

\section{Inverse Problem}
\vspace{-0.4cm}
In this section, we study some properties of the inverse source problem (ISP) consisting in the estimation of the  source from the knowledge of the concentration at  time $t=T$:
$$
f = K(g_T) \quad \quad \quad (ISP)
$$
  We will study the ill-posedness of the problem in the sense of Hadamard \cite{Kirsch}. 
\subsection{Existence }
	The following theorem shows that the measurement can determine the source term.
		\begin{theorem}
	Assuming that $f, g_0 \in L^2(\mathbb{R})$, then the solution of the inverse problem for the system (\ref{u_FADE}), which determines  the source term $f$ using the measurement given in (\ref{prior1}), exists.
	\end{theorem}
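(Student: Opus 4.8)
The plan is to exhibit the source explicitly by passing to the Fourier domain, where the two spatial convolutions in (\ref{Solution_DP}) become multiplications. Evaluating the fundamental solution at the observation time $t=T$ and writing $g_T(x)=u(x,T)$, I would first take the Fourier transform in $x$. Because both terms in (\ref{Solution_DP}) are convolutions of $G_\alpha^\theta$ against $f$ and $g_0$, the convolution theorem turns the relation into
\[
\hat g_T(k)=\left(\int_0^T \hat G_\alpha^\theta(k,T-\tau)\,{\rm d}\tau\right)\hat f(k)+\hat G_\alpha^\theta(k,T)\,\hat g_0(k),
\]
which is an algebraic identity in $\hat f(k)$ for each frequency $k$.

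Next I would evaluate the time integral in closed form. Setting $\beta(k):={\rm i}\nu k-d\,\psi_\theta^\alpha(k)$ so that $\hat G_\alpha^\theta(k,t)={\rm e}^{\beta(k)t}$ by (\ref{F_Green}), one has $\int_0^T {\rm e}^{\beta(k)(T-\tau)}\,{\rm d}\tau=(\,{\rm e}^{\beta(k)T}-1)/\beta(k)$, and solving for $\hat f$ gives
\[
\hat f(k)=\frac{\beta(k)}{{\rm e}^{\beta(k)T}-1}\left(\hat g_T(k)-{\rm e}^{\beta(k)T}\hat g_0(k)\right).
\]
Inverting this transform produces a candidate source, so existence reduces to showing that the right-hand side is well defined and represents an $L^2$ function. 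The first point is that the denominator never vanishes on $\mathbb{R}\setminus\{0\}$: from (\ref{psi}) the real part of $\beta(k)$ equals $-d|k|^\alpha\cos(\theta\pi/2)$, and the constraint $|\theta|\le\min\{\alpha,2-\alpha\}$ forces $\cos(\theta\pi/2)>0$, so $\operatorname{Re}\beta(k)<0$ and hence $|{\rm e}^{\beta(k)T}|<1$ for $k\neq0$. At $k=0$ the numerator and denominator vanish together and the quotient extends continuously to the value $1/T$, so the multiplier $\beta(k)/({\rm e}^{\beta(k)T}-1)$ is a well-defined continuous function.

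The step I expect to be the main obstacle is verifying that the recovered $\hat f$ actually belongs to $L^2(\mathbb{R})$. Since $\operatorname{Re}\beta(k)\to-\infty$ as $|k|\to\infty$, we have ${\rm e}^{\beta(k)T}\to0$, so the multiplier behaves like $-\beta(k)$ and grows like $|k|^\alpha$; it therefore amplifies the high frequencies of the data, which is exactly the mechanism responsible for the ill-posedness studied later. Under the standing hypothesis that the data $g_T$ is generated by a source $f\in L^2(\mathbb{R})$ together with $g_0\in L^2(\mathbb{R})$, the consistency relation above guarantees that $\hat g_T-{\rm e}^{\beta(k)T}\hat g_0$ decays fast enough to offset this growth, so $\hat f\in L^2$ and the inverse Fourier transform returns precisely the generating source; this establishes existence. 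I would record explicitly the compatibility requirement that this decay holds, since without it the formula defines $\hat f$ only as a distribution and existence in $L^2$ can fail.
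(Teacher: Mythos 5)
Your proposal is correct, and it starts from the same place as the paper --- substituting $t=T$ into the fundamental solution and isolating the convolution equation $\int_{-\infty}^{+\infty}\bigl\{\int_0^T G_\alpha^\theta(x-y,T-\tau)\,{\rm d}\tau\bigr\}f(y)\,{\rm d}y=h(x)$ with $h=g_T-G_\alpha^\theta(\cdot,T)*g_0$ --- but it then goes considerably further than the paper does. The paper's own proof stops at this point and concludes with the single sentence ``Since the equation is a convolution, the solution exists,'' which is not by itself a valid deduction: a convolution operator with a decaying symbol is generally not surjective on $L^2$, so the convolution structure alone does not give existence. You supply exactly the missing content: you pass to the Fourier domain, compute the symbol $\int_0^T\hat G_\alpha^\theta(k,T-\tau)\,{\rm d}\tau=({\rm e}^{\beta(k)T}-1)/\beta(k)$ with $\beta(k)={\rm i}\nu k-d\,\psi_\theta^\alpha(k)$, verify it never vanishes (using ${\rm Re}\,\beta(k)=-d|k|^\alpha\cos(\theta\pi/2)<0$ for $k\neq0$ and the removable singularity at $k=0$ with value $T$), and invert it explicitly. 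Notably, the non-vanishing of this symbol is precisely the computation the paper defers to its \emph{uniqueness} proof (Theorem 4.2.1), so your argument in effect merges the two and makes the existence claim honest. Your final observation --- that the inverse multiplier grows like $|k|^\alpha$, so $\hat f\in L^2$ only for attainable data and the theorem's hypothesis $f\in L^2$ is what saves existence --- is a correct and worthwhile refinement that the paper omits entirely; it also cleanly identifies the mechanism behind the instability proved later. In short: same skeleton, but your version is a proof where the paper's is an assertion.
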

	\begin{proof}
	 Substituting (\ref{prior1}) into (\ref{Solution_DP}), we get:
	\begin{equation}
		g_T(x)=   \int_{- \infty}^{+ \infty} \left \{ \int_{0}^{T}   G_\alpha^\theta(x-y,T-\tau) d\tau \right \} f(y)  {\rm d} y +   \int_{- \infty}^{+ \infty}    G_\alpha^\theta(x-y,T) g_0(y)   {\rm d} y.
	\end{equation}
	Then $f$ is the solution of the following equation:
	\begin{equation}\label{conv}
		 \int_{- \infty}^{+ \infty} \left \{ \int_{0}^{T}   G_\alpha^\theta(x-y,T-\tau) d\tau \right \} f(y)  {\rm d} y = h(x),
	\end{equation}
	where
	\begin{equation}
	h(x):=g_T(x) -  \int_{- \infty}^{+ \infty}    G_\alpha^\theta(x-y,T) g_0(y) {\rm d} y.
	\end{equation}

Since (\ref{conv}) is a convolution, the solution exists.

\end{proof}

\subsection{Uniqueness}
In this section, we prove the unique determination of the source term from a given measurements at a specific or a final time.

\begin{theorem}\label{T_2}
Let $u_1(\cdot,T)$ and $u_2(\cdot,T)$ be solutions of (\ref{u_FADE}) with sources $f_1$ and $f_2$, respectively. Then, the condition $u_1(\cdot,T)=u_2(\cdot,T)$ implies that $f_1=f_2$ almost everywhere.
\end{theorem}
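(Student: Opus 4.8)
The plan is to exploit the linear convolution structure already exhibited in the existence proof. Because $u_1$ and $u_2$ solve (\ref{u_FADE}) with the \emph{same} initial datum $g_0$ but with the two different sources $f_1,f_2\in L^2(\mathbb{R})$, subtracting the representation formula (\ref{Solution_DP}) for the two solutions cancels the initial-data term entirely and leaves
\begin{equation*}
u_1(x,T)-u_2(x,T)=\int_{-\infty}^{+\infty}\Phi(x-y)\,[f_1(y)-f_2(y)]\,{\rm d}y,
\qquad
\Phi(z):=\int_0^{T} G_\alpha^\theta(z,T-\tau)\,{\rm d}\tau,
\end{equation*}
which is exactly the convolution kernel appearing in (\ref{conv}). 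The hypothesis $u_1(\cdot,T)=u_2(\cdot,T)$ then says that $\Phi*(f_1-f_2)$ vanishes identically.

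Next I would pass to the frequency domain. Applying the Fourier transform and the convolution theorem turns the vanishing convolution into the pointwise identity $\hat\Phi(k)\,\widehat{(f_1-f_2)}(k)=0$ for almost every $k$. Using (\ref{F_Green}) the symbol is computed by interchanging the $\tau$-integral with the Fourier transform,
\begin{equation*}
\hat\Phi(k)=\int_0^{T}\hat G_\alpha^\theta(k,T-\tau)\,{\rm d}\tau
=\int_0^{T}{\rm e}^{\lambda(k)s}\,{\rm d}s,
\qquad \lambda(k):={\rm i}\nu k-d\,\psi_\theta^\alpha(k),
\end{equation*}
so that $\hat\Phi(k)=\big({\rm e}^{\lambda(k)T}-1\big)/\lambda(k)$ when $\lambda(k)\neq0$ and $\hat\Phi(k)=T$ when $\lambda(k)=0$.

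The crux of the argument—and the step I expect to be the main obstacle—is to show that this symbol never vanishes, so that the product identity forces $\widehat{(f_1-f_2)}\equiv0$. Here I would use the explicit form (\ref{psi}): since cosine is even, $\mathrm{Re}\,\psi_\theta^\alpha(k)=|k|^\alpha\cos(\theta\pi/2)$, and the constraint $|\theta|\le\min\{\alpha,2-\alpha\}<1$ for $1<\alpha\le2$ guarantees $\cos(\theta\pi/2)>0$. Consequently, with $d>0$, we have $\mathrm{Re}\,\lambda(k)=-d\,|k|^\alpha\cos(\theta\pi/2)<0$ for every $k\neq0$. A zero of $\hat\Phi$ with $k\neq0$ would require ${\rm e}^{\lambda(k)T}=1$, i.e.\ $\lambda(k)T\in2\pi{\rm i}\,\mathbb{Z}$, which is purely imaginary and hence incompatible with $\mathrm{Re}\,\lambda(k)<0$; thus no such zero exists. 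At $k=0$ we have $\lambda(0)=0$ and $\hat\Phi(0)=T\neq0$. Therefore $\hat\Phi(k)\neq0$ for every $k\in\mathbb{R}$.

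Finally, dividing out the nonvanishing symbol yields $\widehat{(f_1-f_2)}(k)=0$ for almost every $k$, and injectivity of the Fourier transform on $L^2(\mathbb{R})$ gives $f_1=f_2$ almost everywhere, which completes the proof. The only genuinely nontrivial point is the nonvanishing of $\hat\Phi$; everything else is a routine application of the convolution theorem and of the representation formula established in Theorem~3.1.1.
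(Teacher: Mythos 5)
Your proposal follows essentially the same route as the paper: subtract the representation formulas, pass to the Fourier domain, compute the symbol $\hat\Phi(k)=\bigl({\rm e}^{\lambda(k)T}-1\bigr)/\lambda(k)$ with $\lambda(k)={\rm i}\nu k-d\,\psi_\theta^\alpha(k)$, and conclude from its non-vanishing that $\hat f_1=\hat f_2$. In fact your treatment of the crucial non-vanishing step is cleaner and more correct than the paper's: the paper asserts that ${\rm e}^{\lambda T}-1=0$ forces $\lambda T=0$ (overlooking the roots $\lambda T\in 2\pi{\rm i}\,\mathbb{Z}$) and its real/imaginary-part computation is garbled (both parts are written with $\sin(\theta\pi/2)$), whereas your observation that $\mathrm{Re}\,\lambda(k)=-d\,|k|^\alpha\cos(\theta\pi/2)<0$ for $k\neq 0$ (since $|\theta|\le 2-\alpha<1$) rules out all such roots at once and handles $k=0$ separately via $\hat\Phi(0)=T$.
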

\begin{proof}
Let $u_1(x,T)=u_2(x,T)$.  Since $u_1$ and $u_2$ are solutions of (\ref{u_FADE}), then using (\ref{DP_solution}) we get:
	\begin{equation}
		u_1(x,T) - u_2(x,T)  = \frac{1}{2\pi}\int_{0}^{T}  \int_{- \infty}^{+ \infty}  \hat G_\alpha^\theta(k,T-\tau)  [\hat f_1(k) -\hat f_2(k)]  {\rm e}^{-{\rm i}kx} {\rm d} k \, {\rm d} \tau,
	\end{equation}
which is equivalent to:	
		\begin{equation}
		0  = \frac{1}{2\pi}\int_{0}^{T}  \int_{- \infty}^{+ \infty}  \hat G_\alpha^\theta(k,T-\tau)  [\hat f_1(k) -\hat f_2(k)]  {\rm e}^{-{\rm i} kx} {\rm d} k \, {\rm d} \tau.
	\end{equation}
By applying the Fourier transform, we get:
	\begin{equation}\label{uni}
		 \int_{0}^{T}   \hat G_\alpha^\theta(k,T-\tau) {\rm d} \tau\,  [\hat f_1(k) -\hat f_2(k)]    = 0.
	\end{equation}
Computing the following integration gives us:
\begin{equation}
		 \int_{0}^{T}  \hat G_\alpha^\theta(k,T-\tau) {\rm d}\tau =    \frac{1 }{(i\nu k- d \psi_\theta^\alpha(k))} \left [  {\rm e}^{(i\nu k- d \psi_\theta^\alpha(k))T }- 1	\right ].
\end{equation}
Therefore, equation (\ref{uni}) holds if and only if
	\begin{equation}
	  \frac{1 }{({\rm i}\nu k- d \psi_\theta^\alpha(k))} \left [  {\rm e}^{({\rm i}\nu k- d \psi_\theta^\alpha(k))T }- 1	\right ] = 0 \Longleftrightarrow  	({\rm i}\nu k- d \psi_\theta^\alpha(k))T = 0,
	\end{equation}
which implies that:
	 \begin{equation}
		\begin{array}{ccc}
			Re({\rm i}\nu k- d \psi_\theta^\alpha(k))=0, & \:\:\:\mbox{and}\:\:\: &Im( {\rm i}\nu k- d \psi_\theta^\alpha(k))=0.
		\end{array}
	\end{equation}
The real part  
\begin{equation}
	Re({\rm i}\nu k- d \psi_\theta^\alpha(k))=0,
\end{equation}
if
	 \begin{equation}
	\nu k \pm d |k|^\alpha \sin \theta \pi /2= 0.
	\end{equation}
	Then either $k=0$, or $\theta=1$, but $\theta \neq 1$, since $\theta \le 2-\alpha$ and $\alpha >1$.
	\\
The imaginary part 
	\begin{equation}
		 Im({\rm i}\nu k- d \psi_\theta^\alpha(k))=0,
	\end{equation}
	if 
		\begin{equation}
		\nu k \pm d |k|^\alpha \sin \theta \pi /2= 0,
	\end{equation}
	 \begin{equation}\Rightarrow \left\{
												\begin{array}{ l }  	
     													k=0   \\
     													\eta(k):= \frac{|k|^\alpha}{k} = \mp \frac{\nu}{d \sin(\theta \frac{\pi}{2})}   \\ 
     		
												\end{array} 
											\right.    .
	\end{equation}
	This implies that: 
\begin{equation}
\begin{array}{ll}
 \int_{0}^{T}  \hat G_\alpha^\theta(k,T-\tau) {\rm d}\tau \neq 0, & \quad \quad \forall k \in \mathbb{R}.
\end{array}
\end{equation}
Therefore
\begin{equation}
 \hat f_1(k) - \hat f_2(k)=0,
\end{equation}
which completes the proof.
\end{proof}\subsection{Stability}
In this subsection, we show that the third Hadamard condition \cite{Kirsch}, is not satisfied, $ i.e.$ an arbitrarily small error in the measurement data lead to a large error in the solution \cite{Kirsch}. To prove this unstability, we show that a bounded  perturbation in the source will not affect the final observation of the concentration \cite{Kirsch}. 
\begin{theorem}
The inverse problem ISP is not stable in the sense of Hadamard. 

\end{theorem}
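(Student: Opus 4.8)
The plan is to exploit the explicit Fourier symbol of the forward map already computed in the proof of Theorem~\ref{T_2}. Writing the reduced data equation (\ref{conv}) on the Fourier side, the source $f$ and the right-hand side $h$ are linked by the multiplication operator
\begin{equation}
	\hat h(k) = m(k)\,\hat f(k), \qquad m(k):= \int_0^T \hat G_\alpha^\theta(k,T-\tau)\,{\rm d}\tau = \frac{1}{{\rm i}\nu k - d\psi_\theta^\alpha(k)}\left[{\rm e}^{({\rm i}\nu k - d\psi_\theta^\alpha(k))T}-1\right].
\end{equation}
Recovering $f$ from $h$ (and, since $g_0$ is fixed, a perturbation of $h$ is exactly a perturbation of the measured $g_T$) amounts to dividing by $m(k)$, so the whole question of stability reduces to the behaviour of $m(k)$ for large $|k|$: if $m(k)\to 0$, then the inverse map amplifies high frequencies without bound and cannot be continuous.

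First I would analyze the decay of $m$. Using (\ref{psi}), $\mathrm{Re}\,\psi_\theta^\alpha(k)=|k|^\alpha\cos(\theta\pi/2)$, and since $|\theta|\le 2-\alpha<1$ for $\alpha>1$ we have $\cos(\theta\pi/2)>0$ and bounded away from $0$. Hence $\mathrm{Re}\big({\rm i}\nu k - d\psi_\theta^\alpha(k)\big)=-d|k|^\alpha\cos(\theta\pi/2)\to-\infty$, so the exponential factor satisfies $|{\rm e}^{({\rm i}\nu k-d\psi_\theta^\alpha(k))T}|={\rm e}^{-dT\cos(\theta\pi/2)|k|^\alpha}\to 0$. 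Because the denominator grows like $|k|^\alpha$, this yields the asymptotics $|m(k)|\sim \tfrac{1}{d|k|^\alpha}\to 0$ as $|k|\to\infty$.

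With the decay in hand, I would build the explicit instability sequence suggested by the physical interpretation: a bounded perturbation of the source whose effect on the observation vanishes. Pick a fixed compactly supported $L^2$ bump $\phi$ (with support in $[-R,R]$) and translate it in frequency, i.e.\ take $\delta f_n$ defined by $\widehat{\delta f_n}(k)=\phi(k-n)$; then by Plancherel $\norm{\delta f_n}_{L^2}$ is a positive constant independent of $n$. The induced data perturbation $\delta h_n$ has $\widehat{\delta h_n}(k)=m(k)\phi(k-n)$, and
\begin{equation}
	\norm{\delta h_n}_{L^2}^2=\frac{1}{2\pi}\int_{-\infty}^{+\infty}|m(k)|^2\,|\phi(k-n)|^2\,{\rm d}k\le \frac{1}{2\pi}\Big(\sup_{|k|\ge n-R}|m(k)|^2\Big)\norm{\phi}_{L^2}^2 .
\end{equation}
By the decay of $m$ the supremum tends to $0$, so $\norm{\delta h_n}_{L^2}\to 0$ while $\norm{\delta f_n}_{L^2}$ stays constant. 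Thus arbitrarily small changes $\delta h_n$ (equivalently $\delta g_T$) in the data correspond to source perturbations $\delta f_n$ of fixed size, which shows the inverse map $K$ fails the third Hadamard condition.

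I expect the main obstacle to be making the asymptotic estimate of $m(k)$ fully rigorous rather than heuristic: in particular, controlling the two competing terms uniformly (the exponential term, which decays super-polynomially, and the constant term $-1$, which decays only through the $|k|^{-\alpha}$ denominator), and verifying that $\cos(\theta\pi/2)$ is bounded away from $0$ so that the decay is uniform on the half-line $|k|\ge n-R$. A secondary technical point is the choice of $\phi$: taking $\phi$ compactly supported makes the supremum estimate immediate, whereas a merely decaying $\phi$ would force a splitting of the integral into a high-frequency part (where $m$ is small) and a negligible low-frequency tail.
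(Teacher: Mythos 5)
Your proof is correct and reaches the right conclusion, but it takes a genuinely different (and in some respects sharper) route than the paper. The paper stays in physical space: it perturbs the source by $A\sin(n\pi x/L)$, notes that the $L^2$ norm of this perturbation is the constant $\sqrt{A^2L/2}$, and invokes the Riemann--Lebesgue lemma to argue that the resulting perturbation of $u(\cdot,T)$ tends to $0$. You instead diagonalize the forward map in Fourier space, compute the multiplier $m(k)=\int_0^T\hat G_\alpha^\theta(k,T-\tau)\,{\rm d}\tau$ explicitly, prove its decay $|m(k)|\lesssim |k|^{-\alpha}$ using $\mathrm{Re}\,\psi_\theta^\alpha(k)=|k|^\alpha\cos(\theta\pi/2)>0$, and then feed in frequency-translated bumps. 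What your approach buys: it is quantitative (it identifies the degree of ill-posedness as polynomial of order $\alpha$, i.e.\ mildly ill-posed), and it is actually more rigorous than the paper's own argument, since the Riemann--Lebesgue lemma only gives pointwise decay of the integral in $x$ and the paper does not justify the passage to convergence in the $L^2$ norm (one needs a domination or uniformity argument, which your multiplier bound supplies for free). What the paper's approach buys: its perturbation $A\sin(n\pi x/L)\cdot p(x)$ is supported in the physical domain $]0,L[$, i.e.\ it is an admissible source of the form $r\cdot p$.

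One caveat you should address: by Paley--Wiener, a nonzero $\delta f_n$ whose Fourier transform is compactly supported cannot itself be compactly supported, so your perturbations are not of the form $r\cdot p$ and leave the class of sources the inverse problem is actually posed over. This is easily repaired --- replace the frequency bump by the zero-extension of $A\sin(n\pi x/L)$, whose Fourier transform concentrates its $L^2$ mass near $k=\pm n\pi/L$, split the frequency integral into the region $|k|\ge n\pi/L - R$ (where your bound on $|m|$ applies) and a low-frequency region carrying vanishing mass --- but as written the construction proves instability of the extended operator on $L^2(\mathbb{R})$ rather than of the operator restricted to sources supported in $]0,L[$.
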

\begin{proof}

We asume that  $u$ and $u^\delta$ are solutions of (\ref{u_FADE}) with sources $f$ and $f^\delta$ respectively. 
We suppose that there exists $\delta_n = f_{\delta_n} - f $ with $\lim_{n\rightarrow \infty}{\parallel \delta_n\parallel} =\delta$, $\delta\neq 0$ and  we show that  $ \norm{ u^\delta(\cdot,T) -  u(\cdot,T)}  \rightarrow 0$.

	 We take  $\delta_n(x)=A \sin(\frac{n\pi x}{L})$, where $A$ is a constant, we obtain:
	\begin{eqnarray}
		\norm{f^ \delta (x) - f(x)}_2^2 = \norm{f(x) + A \sin(\frac{n\pi x}{L})- f(x)}_2^2
	\end{eqnarray}
\begin{eqnarray}
	 = \norm{ A \sin(\frac{n\pi x}{L})}_2^2 = \int_0^L A^2 \sin^2(\frac{n\pi x}{L}) {\rm d} x = \frac{1}{2} A^2 L \neq 0.
	\end{eqnarray}
	\begin{equation}
		 \norm{u^\delta(x,T) -  u(x,T)}_2^2 =  \norm{\int_{- \infty}^{+ \infty} \left \{ \int_{0}^{T}   G_\alpha^\theta(x-y,T-\tau) {\rm d} \tau \right \} [ f^\delta(y) - f(y)]  {\rm d} y} _2^2.
	\end{equation}
Then, applying the Riemann-Lebesgue lemma we get \cite{Furdui}:
	\begin{equation}
	\norm{ u^\delta(x,T) -  u(x,T)}_2^2 = \norm{\int_{- \infty}^{+ \infty} \left \{ \int_{0}^{T}   G_\alpha^\theta(x-y,T-\tau) {\rm d} \tau \right \} [  A \sin(\frac{n\pi y}{L}))]  {\rm d} y  }_2^2 \longrightarrow 0.
	\end{equation}
\end{proof}

\section{Numerical Analysis}
\vspace{-0.4cm}
In this section, we present a numerical solution to reconstruct the source term for the space fractional advection dispersion equation (\ref{FADE}) from the measurement $c(\cdot,T)$. 	

\subsection{Finite Difference Method for the Direct Problem}

For the direct problem, we discretize (\ref{FADE}) using a finite difference scheme similar to the one introduced by Meerschaert and  Tadjeran \cite{Meerschaert}. The shifted Grunwald formula is used to discretize the Risze-Feller fractional derivative \cite{ZhengT, ZhangN, MeerschaertF}, as follows:
	\begin{equation}\label{shifted_G}
		D^\alpha _\theta c(x,t) = -[a_r \, _{-\infty}D_x^\alpha c(x,t)+ a_l \, _xD_{\infty}^\alpha c(x,t)],
	\end{equation}
	where
	\begin{equation}
				_{-\infty}D_x^\alpha c(x,t) = \lim_{N \to \infty} \dfrac{1}{h^\alpha}  \sum_{k=0}^N \xi_{\alpha, k} c[x-(k-1)h,t],
	\end{equation}
	\begin{equation}
				_xD_{\infty}^\alpha c(x,t)= \lim_{N \to \infty} \dfrac{1}{h^\alpha}  \sum_{k=0}^{N-i+1} \xi_{\alpha, k}  c[x+(k-1)h,t],
	\end{equation}
	\begin{equation}
 \begin{array}{ll}
 a_r= \dfrac{\sin \frac{(\alpha-\theta)\pi}{2}}{\sin (\alpha \pi)}, &  a_l= \dfrac{\sin \frac{(\alpha+\theta)\pi}{2}}{\sin (\alpha \pi)},
 \end{array}
\end{equation}
and $\xi_{\alpha, k}$ is the  normalized Grunwald weight  defined by:
	\begin{equation}
	\xi_{\alpha,k}=\frac{\Gamma (k-\alpha)}{\Gamma (-\alpha) \Gamma (k+1)}.
 	\end{equation}
	

	Then, the explicit Euler and the finite difference methods are used to discretize the time and the spatial derivatives, respectively \cite{Meerschaert}:
		\begin{equation}\label{Euler_finite}
			\begin{array}{ccc}
 				\dfrac{\partial c(x,t)}{\partial t}=\dfrac{c_i^{j+1}-c_i^j}{\Delta t }, & &
 				\dfrac{\partial c(x,t)}{\partial x}=\dfrac{c_{i}^{j+1}-c^{j+1}_{i-1}}{\Delta x },
			\end{array}
 		\end{equation}
	where $\Delta t$ and $\Delta x$ are the time step and space step, respectively.

Substituting (\ref{shifted_G}) and (\ref{Euler_finite}) in (\ref{FADE}), we get the following discretization form:
	\begin{equation}\label{dis_2}
		 \frac {c^{j+1}_i -  c^{j}_i}{\Delta t} =  -  \nu \frac{(c^{j+1}_{i} - c^{j+1}_{i-1})}{\Delta x} + d \, \delta_{\alpha, x}  c^{j+1}_i+ r^{j+1}_i,
	\end{equation}
	where
	\begin{equation}
		 \delta_{\alpha,x} c^j_i =- \frac{1}{(\Delta x)^\alpha} [ a_r \sum_{k=0}^{i+1} \xi_{\alpha,k} c^j_{i-k+1} + a_l \sum_{k=0}^{N-i+1} \xi_{\alpha,k} c^j_{i+k-1}] ,
 	\end{equation}
with $ i=1,\dots, N-1 \,  \, \mathrm{and} \, \,  j=1,2,\dots$
with $t_j=j\Delta t$, $x_i=i \Delta x$, $c^j_i=c(x_i,t_j)$, $r_i=r(x_i)$, $c_0^j=0$, and $c_{N}^j=0.$ \\
Then, we get:
	\begin{equation}\label{discrete_form}
 	\begin{array}{lll}
 			(1-d \Delta t \delta _{\alpha, x}) c^{j+1}_i   + \dfrac{\Delta t}{\Delta x} \nu (c^{j+1}_{i} - c_{i-1}^{j+1} )= c_i^j+r_i \Delta t.
	 \end{array}
\end{equation}
Thus, the matrix form of the implicit finite difference scheme (\ref{discrete_form}) is given by:
	 \begin{equation}\label{matrix 1}
	[({\textrm{\bf I}} - {\textrm{\bf  G }} - {\textrm{\bf  L}})+{\textrm{\bf V}}] {\textrm{\bf C}}^{j+1} ={\textrm{\bf C}}^j + {\textrm{\bf R}}
	\end{equation} 
for $n=1,2,\cdots, N-1$ and $m=1,2,\cdots,N-1$, where
\begin{equation}
	{\textrm{\bf  G}}(m,n)=a_r\frac{d (\Delta t) }{(\Delta x)^\alpha}
		\left\{
			\begin{array}{l  l}
      			\xi_{\alpha,m-n+1}    & n\le m-1, \\
     				 \xi_{\alpha,1}    & n= m, \\
			  	 \xi_{\alpha,0}    & n= m+1, \\
				 0 & \text{ else}.\\
			\end{array}
		\right.
\end{equation}
\begin{equation}
	{\textrm{\bf  L}}(m,n)=a_l\frac{d (\Delta t) }{(\Delta x)^\alpha}
		\left\{
			\begin{array}{l  l}
      			\xi_{\alpha,m-n+1}    & n\ge m+1, \\
     				 \xi_{\alpha,1}    & n= m, \\
			  	 \xi_{\alpha,0}    & n= m-1, \\
				 0 & \text{ else}.\\
			\end{array}
		\right.
\end{equation}
\begin{equation}
	{\textrm{\bf  {\textrm{\bf  V}}}}(m, n)= \frac{v \Delta t }{\Delta x}
		\left\{
			\begin{array}{l  l  l}
      				1 & n=m,   \\
      				-1 & m=n-1, \\
     				0 & \text{ else}. \\	
			\end{array}
		\right.
\end{equation}
		\begin{equation} 	
		{\textrm{\bf  R}}= \Delta t \times[ r_1, r_2, \cdots, r_{N-1}]^{\rm T},
	 \end{equation}
 	\begin{equation}
		{\textrm{\bf  C}}^{j+1}=[ c_1^{j+1}, c_2^{j+1}, \cdots, c_{N-1}^{j+1}]^{\rm T}.
	\end{equation}
Consequently, when estimating the source term, the direct problem can be solved using the implicit difference method.

\subsection{Inverse Problem}
  Let ${\textrm{\bf  A}} = [({\textrm{\bf I}} - {\textrm{\bf  G}} - {\textrm{\bf  L}})+{\textrm{\bf  V}}] ^{-1}$, then equation (\ref{matrix 1}) can be written as:
  	\begin{equation}
		{\textrm{\bf  C}}^{j+1}= {\textrm{\bf  A}} ({\textrm{\bf  C}}^{j} + {\textrm{\bf  R}}).
	\end{equation}
By induction, we get the following equation:
 	\begin{equation}\label{equation 2}
		 {\textrm{\bf C}}^N-{\textrm{\bf  A}}^N {\textrm{\bf C}}^0=({\textrm{\bf  I}}-{\textrm{\bf  A}})^{-1} ({\textrm{\bf  I}}-{\textrm{\bf  A}}^N){\textrm{\bf  A}} {\textrm{\bf  R}},
	\end{equation}
which can be written in the following form:
	\begin{equation}
	{\textrm{\bf  Y}}={\textrm{\bf  K}}{\textrm{\bf  R}},
	\end{equation}
where
   	\begin{equation}
	 {\textrm{\bf K}}=({\textrm{\bf I}}-{\textrm{\bf A}})^{-1} ({\textrm{\bf I}}-{\textrm{\bf A}})^N {\textrm{\bf A}},
	\end{equation}
and
	 \begin{equation}
	  {\textrm{\bf Y}}={\textrm{\bf C}}^N-{\textrm{\bf A}}^N {\textrm{\bf C}}^0.
	 \end{equation}
In order to estimate the unknown source, we propose to minimize the following cost function with the
Tikhonov regularization:
\begin{equation}\label{cost_fun}
			J_{\lambda}( {\textrm{\bf R}})= \norm{ {\textrm{\bf Y}} -  {\textrm{\bf KR}}}_2^2  + \lambda \, \Omega( {\textrm{\bf R}}),
		\end{equation}
where $ {\textrm{\bf Y}}$ is the observation and $\Omega( {\textrm{\bf R}})= \norm{\dfrac{{\rm d} ^m {\textrm{\bf R}}}{{\rm d} x^m}}$ with $m=0,1$ is a stabilization functional of $ {\textrm{\bf R}}$ which usually includes a priori information on the problem. The regularization parameter $\lambda$ can be determined using the L-curve \cite{Kirsch}.
\section{Numerical Example}
\vspace{-0.4cm}
Let us consider the following space fractional advection dispersion equation:
	\begin{eqnarray}\label{example_1}
		\begin{array}{lll}
			 \dfrac{\partial c(x,t)}{\partial t} = -0.3 \dfrac{\partial c(x,t)}{\partial x} + 3 \dfrac{\partial ^ {1.5} c(x,t) }{\partial x^{1.5}} + r(x), & 0<x< 7, & t >0,
		\end{array}
 	\end{eqnarray}
 with the following initial and Dirichlet boundary conditions:
\begin{eqnarray}\label{condition_1}
	\left\{
		\begin{array}{ l }  	
     			c(x,0)=0,   \\
     			c(0,t)=0, \\
     			c(7,t)=0. \\
		\end{array}
	\right.
\end{eqnarray}
In this example, we assume that the source term $r(x) =5 \sin \frac{2\pi}{7} x$ is unknown. \\

Figure \ref{DP}, represents the numerical solution of (\ref{example_1}) with the conditions (\ref{condition_1}) at  time $T=1$, which will be used as the exact solution when recovering the source term numerically.
\begin{figure}[http]
	\begin{center}
		\includegraphics[width=3in]{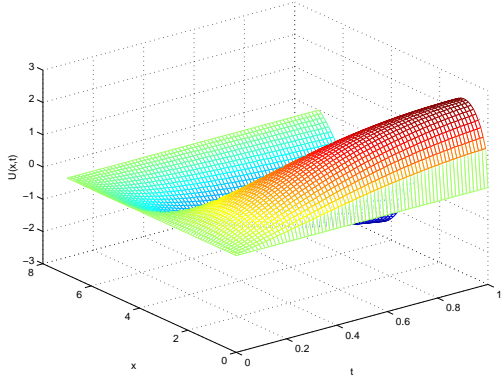}
		 \caption{Numerical solution of the direct problem.}
		 	\label{DP}
	\end{center}
\end{figure}

\begin{figure}[http]
	\begin{center}
		\includegraphics[width=3in]{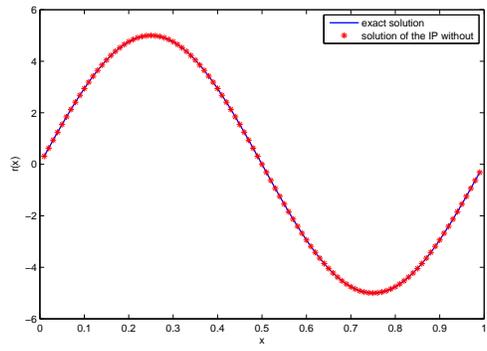}
		 \caption{Solution of the inverse problem without noise.}
		 	\label{Inverse}
	\end{center}
\end{figure}
 \begin{figure}[http]
	\begin{center}
		\includegraphics[width=3in]{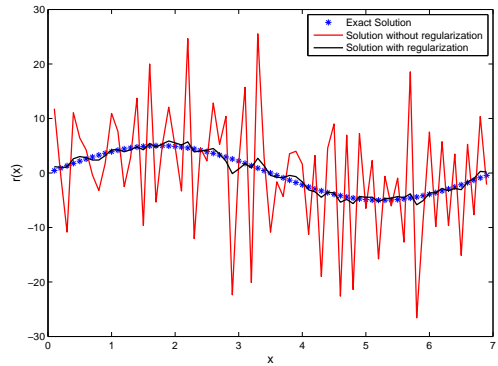}
		 \caption{Solution of the inverse problem with and without regularization with $5\%$ noisy measurements.}
		 	\label{IP}
	\end{center}

\end{figure}

\begin{figure}[http]
	\begin{center}
		\includegraphics[width=3in]{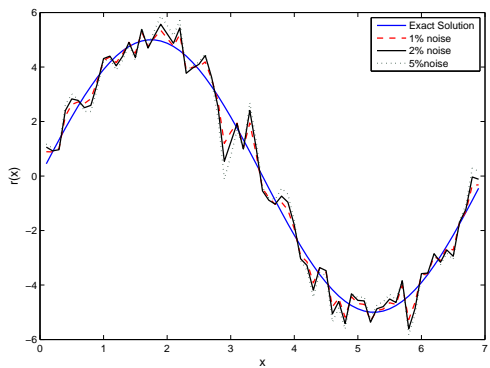}
		 \caption{The exact and the regularized solution with different noise levels.}
		 	\label{noises}
	\end{center}
\end{figure}

Figure \ref{Inverse}, represents the approximated solution $r^\delta$ of the inverse problem without regularization in noise free case. As we can see, if we have the exact measurement  ($i.e.$ without noise) then, the numerical solution matches the approximated solution. While, recovering the source term from noisy measurements is severely ill-posed (see Figure \ref{IP}).

 In Figure \ref{IP}, a comparison of the approximated solutions with and without regularization, between the exact source term and the approximated $r^\lambda$ is given,  where the stabilization functional is $\Omega({\textrm{\bf R}}) = \norm{{\textrm{\bf R}}} $. Clearly from the figure, the presence of noise in the data affects greatly the reconstruction and with the use of the Tikhonov regularization the numerical results are quite satisfactory.
	
 In Figure \ref{noises}, comparisons under different noise levels $1\%$, $2\%$ and $ 5\%$, between the exact source term  and the approximated $r^\lambda$ are given. In all three cases, the results are stable and reasonable, which are further confirmed by the errors in Table \ref{Table_2}.

 As shown in Figure \ref{noises}, the Tikhonov regularization with $\Omega({\textrm{\bf R}}) = \norm{{\textrm{\bf R}}} $ produces a stable solution, but the solution is not smooth enough. Therefore, we  minimized the cost function given in (\ref{cost_fun}) with \begin{equation}\label{smooth}
 \Omega({\textrm{\bf R}}) = \norm{ \dfrac{{\rm d} {\textrm{\bf R}}}{{\rm d} x} },
 \end{equation} 
 which produces a smooth solution (see Figures \ref{fig_1} and \ref{fig_2}). 
Better results obtained using (\ref{smooth}) is due to regularity of the source term considered. Thus, minimizing (78) with the stabilization functional given in (\ref{smooth}) will force the solution to be smooth.

In Tables 1 and 2, the relative errors of the approximated source term $r^\delta$ to the exact source $r$ are given. It can be seen that the smaller the noise level, the
better the approximative effect.

 \begin{figure}[http]
	\begin{center}
		\includegraphics[width=3in]{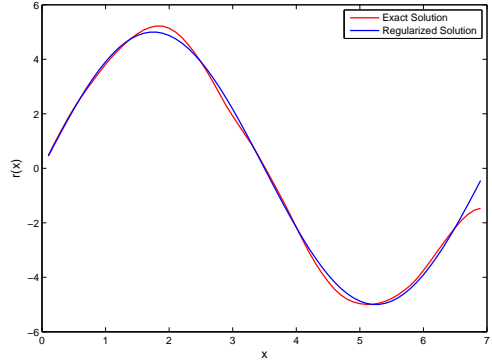}
		 \caption{Exact solution and the regularization solution with $5\%$ noisy measurement.}
		 	\label{fig_1}
	\end{center}

\end{figure}

\begin{figure}[http]
	\begin{center}
		\includegraphics[width=3in]{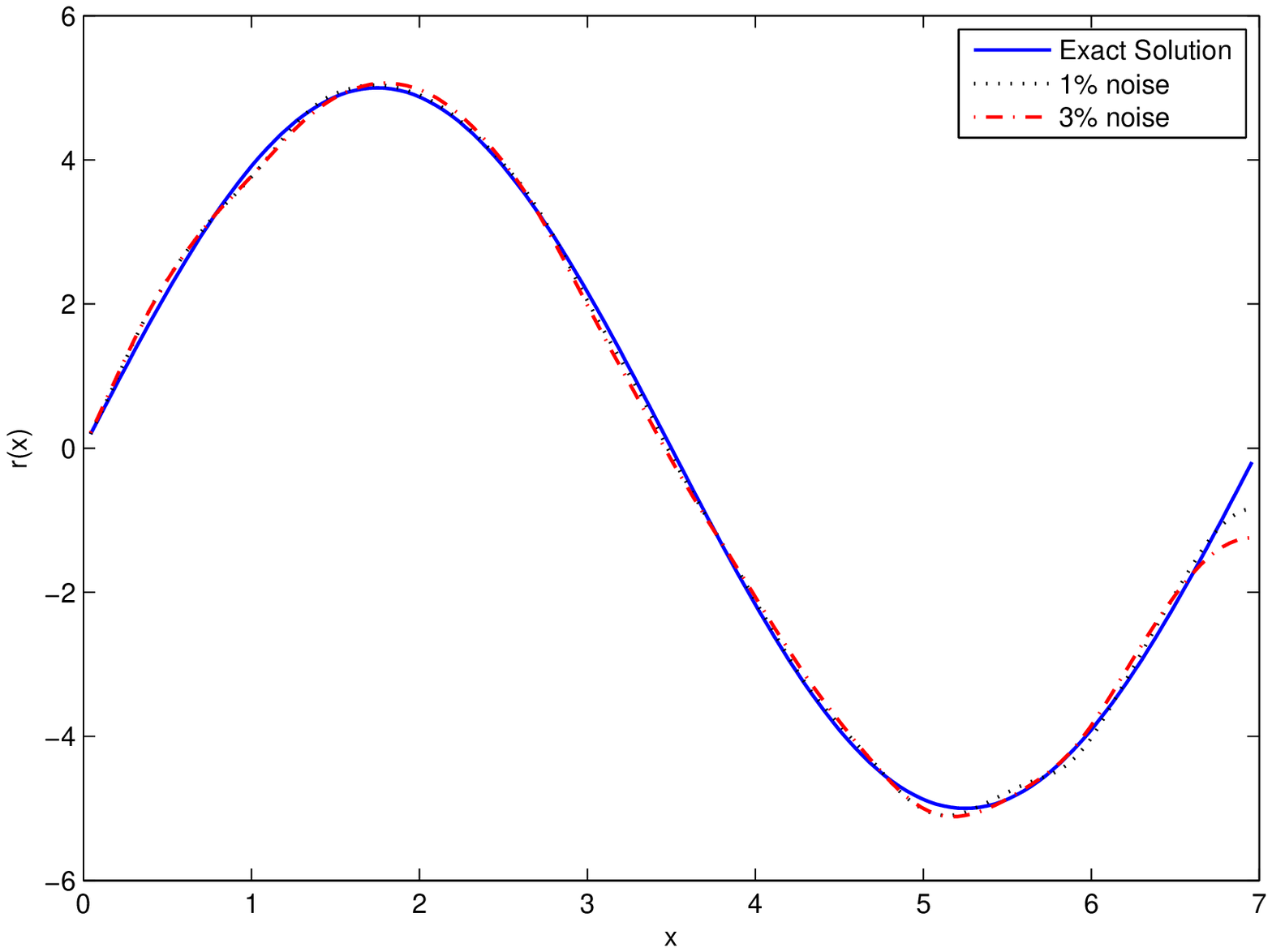}
		 \caption{The exact and the regularized solution with different noise levels.}
		 	\label{fig_2}
	\end{center}

\end{figure}

 \begin{table}[http]\label{Tab_1}
 \caption{The relative errors: $\Omega(R) = \parallel R' \parallel$, $\theta = 0.3$.}
 \begin{center}
\begin{tabular}{c c c  c}

\hline\hline
$N$ & 5\% noise& 3\% noise  & 1\% noise\\ 
\hline

 $40$ & $10.18$ & $7.98$	& $4.53$\\

$60$&  $8.97$	& $6.91$ &	$4.25$\\

  $80$ & $7.81$	& $5.62$ &	$4.13$\\

   $100$ & $6.84$ &	 $5.76$  & $4.80$\\

    $120$ & $9.56$	&	$7.4$ & $4.13$ \\
    
     $140$ & $7.93$	&	$6.30$ & $4.26$ \\

    $160$ & 	$6.16$ & $4.88$  & $3.06$\\
[1ex]
\hline
\end{tabular}
\end{center}
\end{table}

 \begin{table}[http]\label{Table_2}
 \caption{The relative errors with different stabilizing functional $\Omega(R)$ when: $N=100$, $\theta=0.3$.}
 \begin{center}
\begin{tabular}{c c  c}

\hline
\hline
& $\Omega(F)$ & \\
$\sigma_{noise}$ & $\parallel F \parallel$ & $\parallel F' \parallel$ \\ 

\hline

$5 \%$ & $20.23$   &  $6.84$\\
$2\%$ &  $16.06$ &  $5.76$\\
$1\%$ &  $11.14$ & $4.80$\\
[1ex]
\hline
\end{tabular}
\end{center}
\end{table}

 \section{Conclusion}
 \vspace{-0.4cm}
 In this paper, we have analyzed the mathematical properties of the direct and the inverse problem for a space fractional advection dispersion equation. For the direct problem, we have presented an analytic solution by applying the Fourier transform method and the Duhamel�s principle. Moreover, we have proved the uniqueness of the solution.  Due to the linearity of the operator, we have proved the unique determination of the source term from a final observation. Moreover,  we have analytically showed that the inverse source problem is ill-posed due to the lack stability requirements. Furthermore, a numerical solution based on the Tikhonov regularization has been presented.

\section*{Acknowledgments}
\vspace{-0.4cm}
The authors would like to express great appreciation to Prof. Manuel Ortigueira and Prof. William Rundell for their valuable suggestions and comments. 
\bibliographystyle{plain} 

\bibliography{gAPAguide} 

\begin{thebibliography}{10}

\bibitem{BondarenkoG}
A.~N. Bondarenko and D.~S. Ivaschenko.
\newblock Generalized sommerfeld problem for time fractional diffusion
  equation: analytical and numerical approach.
\newblock {\em Journal of Inverse Ill-posed Problems}, 17(4):321--335, June
  2009.

\bibitem{BondarenkoN}
A.~N. Bondarenko and D.~S. Ivaschenko.
\newblock Numerical methods for solving inverse problems for time fractional
  diffusion equation with variable coefficient.
\newblock {\em Journal of Inverse Ill-posed Problems}, 17(7):419--440, July
  2009.

\bibitem{BrunnerN}
Hermann Brunner, Leevan Ling, and Masahiro Yamamoto.
\newblock Numerical simulations of 2d fractional subdiffusion problems.
\newblock {\em Journal of Computational Physics}, 229(18):6613--6632, September
  2010.

\bibitem{ChenAD}
S.~Chen and F.~Liu.
\newblock Adi-euler and extrapolation methods for the two-dimensional
  fractional advection-dispersion equation.
\newblock {\em Journal of Applied Mathematics and Computing}, 26:295--311,
  2008.

\bibitem{Chi}
G.~Chi, G.~Li, and X.~Jia.
\newblock Numerical inversions of a source term in the fade with a dirichlet
  boundary condition using final observations.
\newblock {\em Computers and Mathematics with Applications}, 62(4):1619--1626,
  2011.

\bibitem{Furdui}
Ovidiu Furdui.
\newblock {\em Limits, Series, and Fractional Part Integrals: Problems in
  Mathematical Analysis}.
\newblock Springer, New York, 2013.

\bibitem{Huang}
F.~Huang and F.~Liu.
\newblock The fundamental solution of the space-time fractional
  advection-dispersion equation.
\newblock {\em Journal of Applied Mathematics and Computing}, 18(1 -
  2):339--350, 2005.

\bibitem{Jeffrey}
Alan Jeffrey.
\newblock {\em Applied Partial Differential Equations}.
\newblock Academic Press, 2002.

\bibitem{Kirsch}
Andreas Kirsch.
\newblock {\em An introduction to the mathematical theory of inverse problems}.
\newblock Springer, New York, 2011.

\bibitem{LiuN}
F.~Liu, V.~Anh, I.~Turner, and P.~Zhuang.
\newblock Numerical simulation for solute transport in fractal porous media.
\newblock {\em ANZIAM Journal}, 45:C461--C473, 2004.

\bibitem{MainardiF}
F.~Mainardi.
\newblock The fundamental solutions for the fractional diffusion-wave equation.
\newblock {\em Applied Mathematics Letters}, 9(6):23--28, November 1996.

\bibitem{MeerschaertF}
M.~Meerschaert and C.~Tadjeran.
\newblock Finite difference approximations for two-sided space-fractional
  partial differential equations.
\newblock {\em Applied Numerical Mathematics}, 56:80--90, 2006.

\bibitem{Meerschaert}
M.~M.Meerschaert and C.~Tadjeran.
\newblock Finite difference approximations for fractional advection-dispersion
  flow equations.
\newblock {\em Journal of Computational and Applied Mathematics}, 172:65--77,
  November 2004.

\bibitem{PedasN}
Arvet Pedas and Enn Tamme.
\newblock Numerical solution of nonlinear fractional differential equations by
  spline collocation methods.
\newblock {\em Computers and Mathematics with Applications}, 255:216--230,
  January 2014.

\bibitem{podl}
I.~Podlubny.
\newblock {\em Fractional differential equations}.
\newblock Academic Press, 1999.

\bibitem{QianO}
Z.~Qian.
\newblock Optimal modified method for a fractional-diffusion inverse heat
  conduction problem.
\newblock {\em Inverse Problems in Science and Engineering}, 18(4):521--533,
  2010.

\bibitem{SakamotoI}
Kenichi Sakamoto and Masahiro Yamamoto.
\newblock Initial value/boundary value problems for fractional diffusion-wave
  equations and applications to some inverse problems.
\newblock {\em Journal of Mathematical Analysis and Applications},
  382(1):426--447, October 2011.

\bibitem{SalimA}
T.~O. Salim and Ahmad el~Kahlout.
\newblock Analytical solution of time-fractional advection dispersion equation.
\newblock {\em Applications and Applied Mathematics}, 4(1):176--188, June 2009.

\bibitem{Schumer2}
R.~Schumer, D.~A. Benson, M.~M. Meerschaert, and S.~W. Wheatcraft.
\newblock Eulerian derivation of the fractional advectionÐdispersion equation.
\newblock {\em Journal of Contaminant Hydrology}, 48:69--88, March 2001.

\bibitem{Schumer1}
R.~Schumer, M.~M. Meerschaert, and B.~Baeumer.
\newblock Fractional advection-dispersion equations for modeling transport at
  the earth surface.
\newblock {\em Journal of Geophysical Research}, 114, December 2009.

\bibitem{Wei}
H.~Wei, W.~Chen, H.~Sun, and X.~Li.
\newblock A coupled method for inverse source problem of spatial fractional
  anomalous diffusion equations.
\newblock {\em Inverse Problems in Science and Engineering}, 18(7):945--956,
  2010.

\bibitem{Xiong}
X.~Xiong, Q.~Zhou, and Y.C. Hon.
\newblock An inverse problem for fractional diffusion equation in 2-dimensional
  case: Stability analysis and regularization.
\newblock {\em Journal of Mathematical Analysis and Applications},
  393:185--199, September 2012.

\bibitem{Zhang}
D.~Zhang, G.~Li, G.~Chi, X.~Jia, and H.~Li.
\newblock Numerical identification of multi-parameters in the space fractional
  advection dispersion equation by final observations.
\newblock {\em Journal of Applied Mathematics}, 2012, 2012.

\bibitem{ZhangN}
H.~Zhang, F.~Liu, and V.~Anh.
\newblock Numerical approximation of levy-feller diffusion equation and its
  probability interpretation.
\newblock {\em Journal of Computational and Applied Mathematics},
  206(2):1098--1115, 2007.

\bibitem{ZhengT}
G.~H. Zheng and T~Wei.
\newblock Two regularization methods for solving a rieszÐfeller
  space-fractional backward diffusion problem.
\newblock {\em Inverse Problems}, 26(11), October 2010.

\bibitem{Zheng}
G.H. Zheng and T.~Wei.
\newblock Spectral regularization method for cauchy problem of the time
  fractional advection dispersion equation.
\newblock {\em Journal of Computational and Applied Mathematics},
  233:2631--2640, 2010.

\end{thebibliography}

\appendix

\section*{Appendix A. Properties of the Green's Function $G_\alpha^\theta(\cdot,\cdot)$\cite{Huang}:}\label{greens_fuction}

\begin{equation}\label{equation3}
\hat G_\alpha^\theta (k,t)= e^{[i\nu k- d \psi_\theta^\alpha(k)]t }= e^{i\nu k t} e^{- d \psi_\theta^\alpha(k)t }=\hat P_1^1(k;-\nu t) \hat P^\theta_\alpha(k;td),
\end{equation}
and
\begin{eqnarray}
\begin{array}{ll}
	\hat P^\theta_\alpha(k;c) = e^{-c\psi_\theta^\alpha(k)} , & c \in \mathbb{R}.
\end{array}
\end{eqnarray}
Using the following scale rule for the Fourier transformation,
\begin{eqnarray}f(cx) \longleftrightarrow^\mathcal{F} |a|^{-1}\hat f(k/c), \end{eqnarray}
we get
\begin{eqnarray}
P_\alpha^\theta(x;c)= |c|^{-1} p_\alpha^\theta(x/c^{\frac{1}{\alpha}}),
\end{eqnarray}
which is non-negative, since $p_\alpha^\theta$ is a probability density function whose Fourier transformation is $\hat p_\alpha^\theta(k)=e^{-\psi_\theta^\alpha(k)}$.

Therefore, the inverse Fourier transformation of (\ref{equation3}) is:
\begin{eqnarray}
	G_{\alpha}^{\theta}(x,t)= \int_{- \infty}^{+ \infty} P_1^1(x-k;\nu t) P^\theta_\alpha(k;td) {\rm d}k,
\end{eqnarray}
where $G_{\alpha}^{\theta}(x,t)= \int_{- \infty}^{+ \infty} P_1^1(x-k;\nu t) P^\theta_\alpha(k;td) {\rm d}k $ is real and normalized \cite{Huang}.

\end{document}